\newtheorem{deff}{Definition}[section]
\newtheorem{lemma}[deff]{Lemma}
\newtheorem{theorem}[deff]{Theorem}
\newtheorem{fact}[deff]{Fact}
\newtheorem{em-example}[deff]{Example}
\newtheorem{em-def}[deff]{Definition}        
\newtheorem{em-remark}[deff]{Remark}         
\newtheorem{em-question}[deff]{Question}
\newtheorem{claim}{Claim}
\def\ker{\mathop{\rm ker}}
\def\P{{\mathbb P}}
\title{Convergent sequences in minimal groups} 
\author[D. Shakhmatov]{Dmitri Shakhmatov}
\address{Graduate School of Science and Engineering,
Division of Mathematics, Physics and Earth Sciences\\
Ehime University, Matsuyama 790-8577, Japan}
\email{dmitri@dpc.ehime-u.ac.jp}
\begin{document}
\begin{abstract} 
A Hausdorff topological group $G$ is {\em minimal\/} if every continuous isomorphism $f:G\to H$ between $G$ and a Hausdorff topological group $H$ is 
open. Clearly, every compact Hausdorff group is minimal.
It is well known that every infinite compact Hausdorff group contains a 
non-trivial convergent sequence.
We extend this result to minimal {\em abelian\/} groups by proving that every infinite minimal abelian group contains a non-trivial convergent sequence. 
Furthermore, we show that ``abelian'' is essential and cannot be dropped.   
Indeed, for every uncountable regular cardinal $\kappa$ we construct a Hausdorff group topology $\mathscr{T}_\kappa$ on the free group $F(\kappa)$ with $\kappa$ many generators having the following properties: 
\begin{itemize}
\item[(i)]
 $(F(\kappa),\mathscr{T}_\kappa)$ is a minimal group;
\item[(ii)]
 every subset of  $F(\kappa)$ of size less than $\kappa$ is $\mathscr{T}_\kappa$-discrete (and thus also $\mathscr{T}_\kappa$-closed); 
\item[(iii)]
 there are no non-trivial proper $\mathscr{T}_\kappa$-closed normal subgroups of $F(\kappa)$.
\end{itemize}
\noindent
In particular, all compact subsets of $(F(\kappa),\mathscr{T}_\kappa)$ are finite, and every Hausdorff quotient group of $(F(\kappa),\mathscr{T}_\kappa)$ is minimal (that is, $(F(\kappa),\mathscr{T}_\kappa)$ is {\em totally minimal\/}).
\end{abstract}

\subjclass{Primary: 22A05; Secondary: 22C05, 54A10, 54A20, 54A25, 54D25, 54H11.}

\keywords{convergent sequence, minimal group, totally minimal group, compact group, abelian group, free group}

\thanks{The author was partially supported by the Grant-in-Aid for Scientific Research no.~19540092 by the Japan Society for the Promotion of Science (JSPS)}

\maketitle

We denote by 
$\mathbb N$
the set 
the set of natural numbers.

Let $X$ be a topological space.
A {\em convergent sequence in $X$ \/}
is a sequence $S=\{x_n:n\in\mathbb{N}\}$ of points of $X$ such that
there exists a point $x\in X$ (called the {\em limit of $S$\/}) so that
$S\setminus U$ is finite for every open subset $U$ of $X$ containing $x$.
(We also say that $S$ {\em converges to $x$\/}.) A sequence $S$ is {\em non-trivial\/} provided that the set $S$ is infinite. 

The identity element of a group $G$ is denoted by $1$. When $G$ is abelian, the additive notation is used, and so $1$ is replaced by the zero element $0$ of $G$.
\section{Results}
Our starting point in this manuscript is the following folklore fact.
\begin{fact}
\label{main:fact}
Every infinite compact Hausdorff group contains a non-trivial convergent sequence.
\end{fact}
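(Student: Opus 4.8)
The plan is to reduce to the trivial case of a metrizable group and then recover the general case by a quotient argument. If $G$ is metrizable it is an infinite compact metric space, hence not discrete; any accumulation point $x$ of $G$ is the limit of a sequence of pairwise distinct points of $G$, and such a sequence is a non-trivial convergent sequence. So the whole problem is to handle the non-metrizable case.

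The first step is to produce an infinite compact metrizable \emph{quotient} of $G$. Let $\mathcal N$ be the family of all closed normal subgroups $N$ of $G$ for which $G/N$ is metrizable. Since a continuous injective homomorphism between compact groups is a topological embedding, for any countable subfamily $\{N_k:k\in\mathbb N\}\subseteq\mathcal N$ the natural map $G/\bigcap_k N_k\to\prod_k G/N_k$ embeds $G/\bigcap_k N_k$ into a metrizable group; hence $\mathcal N$ is closed under countable intersections. By the Peter--Weyl theorem $G$ embeds into a product of unitary groups, so for each $g\neq 1$ there is a finite-dimensional unitary representation not trivial on $g$; its kernel belongs to $\mathcal N$ and omits $g$, whence $\bigcap\mathcal N=\{1\}$. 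A short ``largest index'' argument now yields some $N\in\mathcal N$ with $G/N$ infinite: if all the quotients $G/N$ ($N\in\mathcal N$) were finite, then either their orders would be unbounded — and then a countable intersection of suitable members of $\mathcal N$ would have an infinite quotient, a contradiction — or the orders would attain a maximum at some $N^\ast$, and intersecting $N^\ast$ with a member of $\mathcal N$ omitting a prescribed non-identity point of $N^\ast$ would strictly increase the order, again a contradiction unless $N^\ast=\{1\}$, which contradicts that $G$ is infinite. Fix such an $N$, put $K:=G/N$ and let $q:G\to K$ be the quotient homomorphism; by the metrizable case $K$ contains a non-trivial convergent sequence $y_n\to 1_K$ with pairwise distinct terms.

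The remaining — and, I expect, main — difficulty is to transfer this convergent sequence from $K$ back up to $G$ along $q$. The naive attempt fails: if one simply picks arbitrary preimages $g_n\in q^{-1}(y_n)$, then by continuity every cluster point of $(g_n)$ lies in $\ker q=N$, but $N$ need not be metrizable and $G$ need not be first countable, so $(g_n)$ need not converge and need not even have a convergent subsequence. The point is to exploit the compactness of the kernel in order to correct the lifts — one shows that the preimages $g_n$ can be chosen so that $(g_n)$ actually converges to some point $g\in N$; then the translated sequence $g^{-1}g_n$ still has pairwise distinct terms, since it maps onto $(y_n)$, and converges to $1_G$, which finishes the proof. (Note that were $N$ finite, $G$ would already be metrizable, so the genuine content is the case of an infinite kernel.) An alternative route, bypassing the lifting, is to locate inside $G$ an infinite closed subgroup of countable weight and apply the metrizable case there; by the structure theory of compact groups such a subgroup exists — a maximal protorus when the identity component is non-trivial, an infinite procyclic subgroup when $G$ is profinite with an element of infinite order, and an infinite closed abelian subgroup extracted via periodicity and local-finiteness results when $G$ is profinite and torsion — and in this approach the real obstacle is the purely algebraic fact that every infinite abelian group admits a countably infinite quotient (immediate when the group is not torsion, via a one-dimensional rational quotient, and a short structural case analysis otherwise).
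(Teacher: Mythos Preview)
The paper does not actually give a proof of this fact: it records it as folklore, notes that it follows from the deep Ivanovski\u{\i}--Kuz'minov theorem that every compact group is dyadic, and points to \cite{Shak} for an argument based on Michael's selection theorem. So there is no detailed argument in the paper to compare against; the question is simply whether your self-contained attempt goes through.

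Your reduction to an infinite metrizable quotient is correct, but the main line of your argument has a genuine gap precisely at the sentence ``one shows that the preimages $g_n$ can be chosen so that $(g_n)$ actually converges to some point $g\in N$.'' You do not show this, and ``exploit the compactness of the kernel'' is not an argument: arbitrary lifts $h_n$ do have a cluster point in $N$, but in a group that is not first countable a cluster point need not be the limit of any subsequence, and there is no evident way to correct the $h_n$ coset by coset so that the whole sequence converges. This lifting statement is in fact exactly where the real work lies --- it is what the selection-theoretic machinery alluded to in the paper is for --- and it is not lighter than the dyadicity route you are trying to avoid. As written, the first approach is circular in spirit: you have pushed the entire difficulty into an unproved assertion.

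Your alternative route (find an infinite closed subgroup of countable weight and apply the metrizable case there) is on firmer ground and can be made into a proof, but the sketch also needs repair. A maximal protorus need not have countable weight (already $\mathbb{T}^{\omega_1}$ is its own maximal protorus), so in the case $G_0\neq\{1\}$ you must still pass from the protorus to a metrizable closed subgroup via the abelian/duality argument you mention at the end --- that step is not optional. In the profinite torsion case, ``periodicity and local-finiteness results'' is too vague, and in fact unnecessary: for any infinite profinite group one may choose a strictly decreasing chain $U_0\supsetneq U_1\supsetneq\cdots$ of open subgroups and take $g_n\in U_n\setminus U_{n+1}$; these are pairwise distinct and converge to $1$ directly, with no need to locate a metrizable subgroup at all.
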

This result is a consequence of the theorem of Ivanovski\u{\i} \cite{Iv} and Kuz'minov \cite{Ku} that every compact group is dyadic. We refer the reader to \cite{Shak} for the proof of Fact \ref{main:fact} based on Michael's selection theorem in the spirit of \cite{Uspenskii}.

In Fact \ref{main:fact} compactness cannot be weakened to pseudocompactness or countable compactness, even in the abelian case. Indeed, there exists an example (in ZFC) of a
pseudocompact 
abelian
group without
non-trivial convergent sequences \cite{Sirota}. Furthermore, there are
numerous consistent examples of countably compact abelian groups without non-trivial
convergent sequences, see
\cite{HJ, vD2, Malyhin1,  Tka, Dijkstra-vanMill, DT, Tomita2002,DS4}.
However, the existence of a countably compact group without non-trivial convergent sequences in ZFC alone remains a major open problem in the area \cite[Problem 22]{DS:survey}.

Another well-known generalization of compactness in the class of topological groups is related to the fundamental fact that the topology of a Hausdorff compact space $X$ is a minimal element in the set of all Hausdorff topologies on the set $X$.   
\begin{deff}
{\em 
A Hausdorff group topology $\mathscr{T}$ on  a group $G$ is called  {\em minimal\/}  provided that every Hausdorff group topology $\mathscr{T}'$ on $G$ such that $\mathscr{T}'\subseteq \mathscr{T}$ satisfies 
$\mathscr{T}' = \mathscr{T}$. In such a case the pair $(G,\mathscr{T})$ is called a {\em minimal group\/}.
}
\end{deff}

 The notion of a minimal group was introduced independently by Choquet (see Do\" \i tchi\-nov \cite{Do}) and Stephenson \cite{S}.
We refer the reader to \cite{DPS, D} for additional information on minimal groups.

While compactness cannot be replaced by pseudocompactness or countable compactness in the statement of Fact \ref{main:fact},
 our first result demonstrates that 
compactness can be weakened to minimality provided that the group in question is commutative.

\begin{theorem}
\label{theorem:1}
Every infinite minimal abelian group contains a non-trivial convergent sequence.
\end{theorem}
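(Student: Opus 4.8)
The plan is to use the structure theory of minimal abelian groups. The key tool is the Prodanov--Stoyanov theorem, which asserts that every minimal abelian group is precompact; hence if $G$ is minimal abelian, its Raikov completion $\widetilde{G}$ is a compact abelian group, and $G$ is a dense subgroup of $\widetilde{G}$. Moreover, minimality of a dense subgroup $G$ of a compact abelian group $K$ is governed by the Banaszczyk--Prodanov--Stoyanov--type criterion: $G$ is minimal iff $G$ is \emph{essential} in $K$, meaning $G\cap N\neq\{0\}$ for every non-trivial closed subgroup $N$ of $K$. So from the outset I would fix a compact abelian group $K=\widetilde{G}$ with $G$ dense and essential in $K$, and $G$ infinite.

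First I would dispose of the easy case: if $\widetilde{G}=K$ is \emph{finite}, then $G=K$ is finite, contradicting infiniteness; so $K$ is an infinite compact abelian group. By Fact~\ref{main:fact}, $K$ contains a non-trivial convergent sequence. The difficulty is that such a sequence need not lie in $G$, and $G$ need not be sequentially closed in $K$ — so I cannot simply pull the sequence back. The real work is to manufacture a convergent sequence \emph{inside} $G$, exploiting essentiality.

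The heart of the argument, I expect, is the following dichotomy on the (infinite) compact abelian group $K$. Either $K$ has an infinite torsion closed subgroup, or it does not. If $K$ contains a copy of $\mathbb{Z}(p^\infty)$ or an infinite product/sum of finite cyclic groups as a closed subgroup (equivalently, some Ulm-type invariant is infinite, or the torsion part is large), one builds a convergent sequence of torsion elements: for instance inside $\overline{\bigoplus}_{n} \mathbb{Z}(p)$-type subgroups, partial "unit vectors" converge to $0$; essentiality lets me meet each relevant finite subgroup, and a careful diagonal/back-and-forth selection keeps the chosen nonzero elements in $G$ while their supports march off to infinity, forcing convergence to $0$ in $K$ and hence in $G$. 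In the complementary case $K$ is, up to finite-index adjustments, close to a power of the form $\mathbb{Z}_p^{\kappa}$ or contains $\mathbb{T}$ or $\mathbb{Z}_p$ as a summand; here one uses that a non-torsion element $x\in G$ (which exists, else $G$ is torsion and the previous analysis applies to $\overline{G}=K$) generates a subgroup whose closure contains a copy of $\mathbb{Z}_p$ or $\mathbb{T}$ or $\widehat{\mathbb{Z}}$, inside which the sequence $p^n x$ (or $n! \cdot x$) converges to $0$; since each $p^n x\in G$ and the sequence is non-trivial exactly when $x$ has infinite order, we are done — and if some power of $x$ is trivial we are back in the torsion case.

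I expect the main obstacle to be organizing these cases uniformly and ruling out the degenerate subcase where \emph{every} such candidate sequence turns out to be eventually constant; this is precisely where essentiality of $G$ in $K$ must be invoked quantitatively, to guarantee that the nonzero elements we select genuinely lie in $G$. A clean way to package this is: show that an infinite precompact abelian group $G$ fails to have a non-trivial convergent sequence only if $G$ is (algebraically and topologically) very rigid — e.g. every element has finite order bounded by a fixed $n$ and $G$ embeds densely into a bounded compact group $K=\prod_{i}\mathbb{Z}(n_i)$ — and then observe that in a bounded compact group the socle-type subgroups still produce convergent sequences of the above "unit vector" form that essentiality forces into $G$. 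So the concluding step reduces everything to the bounded-torsion case, where the combinatorics of selecting elements of $G$ with disjoint, outward-moving supports is transparent.
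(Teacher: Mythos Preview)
Your setup is exactly right and matches the paper: Prodanov--Stoyanov gives a compact completion $K$, and the minimality criterion makes $G$ essential in $K$. Your dichotomy is also essentially the paper's: the paper splits on whether $t(K)$ is uncountable (then some $K[p]$ is infinite) or countable (then one finds a closed $G_\delta$-subgroup $N\subseteq K$ with $N\cap t(K)=\{0\}$, so $N$ is torsion-free).

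In the torsion case your instinct is correct but you are working harder than necessary. The clean observation (the paper's Lemma~\ref{lemma:3}) is that essentiality forces $K[p]\subseteq G$ outright: each nonzero $x\in K[p]$ generates a closed subgroup of prime order, which must meet $G$ nontrivially and hence is contained in $G$. So once $K[p]$ is infinite (compact), Fact~\ref{main:fact} gives a non-trivial convergent sequence already inside $G$. No diagonal selection is needed.

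The genuine gap is in your treatment of the torsion-free side. The assertion that $p^n x\to 0$ or $n!\,x\to 0$ for a non-torsion $x$ is false in general, even in compact \emph{torsion-free} abelian groups. Take $K=\widehat{\Q}$ (Pontryagin dual of discrete $\Q$). Evaluation at $1\in\Q$ gives a continuous surjective homomorphism $\chi:\widehat{\Q}\to\T$, and for any $\xi\in\widehat{\Q}$ with $\chi(\xi)\in\T$ irrational and generic one has $n!\,\chi(\xi)\not\to 0$ in $\T$, hence $n!\,\xi\not\to 0$ in $\widehat{\Q}$. Nothing about essentiality of $G$ prevents such $\xi$ from lying in $G$ (indeed $G$ could equal $K$), so your proposed sequence need not converge. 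The same obstruction kills the $p^n x$ variant as soon as more than one prime is in play, e.g.\ in $\Z_p\times\Z_q$.

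The paper repairs this with structure rather than arithmetic. A non-trivial compact torsion-free abelian group has the form $\widehat{\Q}^{\,\sigma_0}\times\prod_{p}\Z_p^{\,\sigma_p}$. If this product has infinitely many non-trivial factors, the paper's Lemma~\ref{lemma:1} (your ``unit vectors'' idea, but applied to the \emph{factors of the product} rather than to multiples of a single element) produces a non-trivial sequence in $G$ converging to $0$, one nonzero $g_i\in G\cap K_i$ per factor, guaranteed by essentiality. If the product has only finitely many factors, the group is metric, and then the trivial Lemma~\ref{lemma:0} finishes. So the missing idea is: in the torsion-free case do not try to push a single element to $0$ by scalar multiplication; instead exploit the product decomposition and pick one essential witness per factor.
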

The particular version of Theorem \ref{theorem:1} for countably compact groups has been announced without proof on page 393 of \cite{DS:survey} (see the text preceding \cite[Problem 23]{DS:survey}).

As usual, we say that a group $G$ is {\em non-trivial\/} provided that $|G|\ge 2$.

Our second result shows that the word ``abelian'' in Theorem \ref{theorem:1} is essential and cannot be omitted. 

\begin{theorem}
\label{theorem:2}
For every 
uncountable
regular cardinal $\kappa$ there exists a Hausdorff group topology $\mathscr{T}_\kappa$ on the free group $F(\kappa)$ with $\kappa$ many generators having the following properties: 
\begin{itemize}
\item[(i)]
 $(F(\kappa),\mathscr{T}_\kappa)$ is a minimal group;
\item[(ii)]
 every subset of  $F(\kappa)$ of size less than $\kappa$ is $\mathscr{T}_\kappa$-discrete (and thus also $\mathscr{T}_\kappa$-closed); 
\item[(iii)]
 there are no non-trivial proper $\mathscr{T}_\kappa$-closed normal subgroups of $F(\kappa)$.
\end{itemize}
In particular,
\begin{itemize}
\item[(a)]
all compact subsets of $(F(\kappa),\mathscr{T}_\kappa)$ 
are
finite, and
\item[(b)] 
every Hausdorff quotient group of $(F(\kappa),\mathscr{T}_\kappa)$ is minimal; that is, $(F(\kappa),\mathscr{T}_\kappa)$ is totally minimal.
\end{itemize}
\end{theorem}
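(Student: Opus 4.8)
The plan is to derive (a) and (b) formally from (i)--(iii), and then to build $\mathscr{T}_\kappa$ so that (i)--(iii) hold. Property (a) is immediate: by (ii) every countably infinite subset of $F(\kappa)$ is $\mathscr{T}_\kappa$-closed and discrete, so an infinite $\mathscr{T}_\kappa$-compact set would contain an infinite closed discrete, hence compact, subspace, which is absurd. Property (b) follows from (i) and (iii): if $N$ is a $\mathscr{T}_\kappa$-closed normal subgroup of $F(\kappa)$ then $N\in\{\{1\},F(\kappa)\}$, so every Hausdorff quotient of $(F(\kappa),\mathscr{T}_\kappa)$ is either trivial or topologically isomorphic to $(F(\kappa),\mathscr{T}_\kappa)$, which is minimal. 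So the entire content lies in the construction.

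I would obtain (ii) by requiring the filter $\mathscr{N}$ of $\mathscr{T}_\kappa$-neighbourhoods of $1$ to be \emph{$\kappa$-complete}, i.e.\ closed under intersections of fewer than $\kappa$ of its members. The regularity of $\kappa$ is what makes such a filter --- invariant under translations and conjugations --- define a Hausdorff group topology, and a routine argument (separate each of the $<\kappa$ points of a set $A$ from the rest by a member of $\mathscr{N}$ and intersect the sets so obtained) shows that $\kappa$-completeness of $\mathscr{N}$ forces every subset of size $<\kappa$ to be closed and discrete. Two consequences pinpoint the difficulty. First, $\mathscr{T}_\kappa$ can have no proper open normal subgroup: such a subgroup would be closed, hence $\{1\}$ or $F(\kappa)$ by (iii), and $\{1\}$ open means discrete; so $\mathscr{T}_\kappa$ is non-linear. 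Second, since the Raikov completion of a $P_{\aleph_1}$-group is again $P_{\aleph_1}$ and infinite, Fact~\ref{main:fact} forbids it to be compact, so $\mathscr{T}_\kappa$ is \emph{non-precompact}. Thus minimality must be secured for a ``fine'' topology --- unlike the standard way of building minimal groups as dense essential subgroups of compact groups.

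For (i) and (iii) I would realise $F(\kappa)$ as a \emph{dense} subgroup of an auxiliary Hausdorff topological group $(G,\tau)$ with $|G|=\kappa$, $G$ \emph{abstractly simple}, $\tau$ a $\kappa$-complete group topology, and $(G,\tau)$ minimal; then $\mathscr{T}_\kappa$ is the topology $G$ induces on $F(\kappa)$. Property (ii) is inherited. For (iii): if $M$ is a $\mathscr{T}_\kappa$-closed normal subgroup of $F(\kappa)$, its closure in $G$ is a $\tau$-closed subgroup normalised by the dense $F(\kappa)$, hence by $G$, so by abstract simplicity of $G$ this closure is $\{1\}$ or $G$, and intersecting with $F(\kappa)$ gives $M\in\{\{1\},F(\kappa)\}$. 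For (i): by the minimality criterion (Prodanov--Stephenson--Banaschewski; see \cite{DPS}) a dense subgroup $D$ of a topological group $H$ is minimal if and only if $H$ is minimal and $D$ is essential in $H$, i.e.\ $D$ meets every non-trivial closed normal subgroup of $H$; here $G$ is minimal by construction, and $F(\kappa)$ is automatically essential because the only non-trivial closed normal subgroup of the simple group $G$ is $G$ itself, which meets $F(\kappa)$ in $F(\kappa)\neq\{1\}$. So everything reduces to producing $(G,\tau)$ together with a dense free subgroup of rank $\kappa$.

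That is the crux. I would take an abstractly simple group $G$ of cardinality $\kappa$ containing free subgroups of rank $\kappa$ --- for instance $\mathrm{PSL}_2$ of a suitable field of cardinality $\kappa$ --- equip it with a $\kappa$-complete Hausdorff group topology $\tau$ of weight $\kappa$, fix a base $\{U_\alpha:\alpha<\kappa\}$ of $\tau$ enumerating all non-empty basic open sets, and build by transfinite recursion of length $\kappa$ elements $g_\alpha\in U_\alpha$ keeping $\{g_\beta:\beta\le\alpha\}$ free at every stage; this succeeds because the set of elements $g$ of $G$ for which $\{g_\beta:\beta<\alpha\}\cup\{g\}$ fails to be free is a union of fewer than $\kappa$ proper ``algebraic'' subsets of $G$, which --- if $\tau$ has been built generically enough --- cannot cover the non-empty open set $U_\alpha$. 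Then $F(\kappa):=\langle g_\alpha:\alpha<\kappa\rangle$ is $\tau$-dense and, with the induced topology, satisfies (i)--(iii). The hard part is making $(G,\tau)$ minimal while keeping $\tau$ $\kappa$-complete: since $(G,\tau)$ is non-precompact, minimality cannot come from a dense essential embedding into a compact group, so I expect to verify it through the Raikov completion $\widehat G$ --- first arranging (an extra requirement on $\tau$) that $G$ is essential in $\widehat G$, equivalently that $\widehat G$ is topologically simple, and then showing that $\widehat G$ carries no strictly coarser Hausdorff group topology by analysing which subgroups of $\widehat G$ could become open and using its topological simplicity to rule them out, in the spirit of Gaughan's theorem that the full symmetric group is minimal in the topology of pointwise convergence. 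Carrying out this verification, and pinning down the exact genericity of $\tau$ that the freeness recursion needs, is where essentially all the work will be.
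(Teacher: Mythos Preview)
Your derivations of (a) and (b) from (i)--(iii) are fine, and your reduction of (i) and (iii) to ``$F(\kappa)$ dense in a minimal topologically simple $G$'' via the general Prodanov--Stephenson criterion is correct. The problem is the construction itself: you have committed to a $\kappa$-complete neighbourhood filter as the \emph{only} way to get (ii), and then left the existence of a minimal $\kappa$-complete topology on your simple $G$ (together with the genericity needed for the freeness recursion) as an unspecified black box --- by your own admission, ``essentially all the work''. Nothing in your outline indicates how to produce such a topology on $\mathrm{PSL}_2(K)$, and the analogy with Gaughan's theorem is not a construction. So the proposal stops exactly where the difficulty begins.

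The paper proceeds quite differently and avoids $\kappa$-completeness altogether. It takes $G=S(X_\kappa)$, the full symmetric group of a carefully stratified set $X_\kappa=\bigcup_{\alpha<\kappa}\{\alpha\}\times F(\alpha)$, with the topology of pointwise convergence. Minimality and the absence of proper closed normal subgroups for any dense subgroup of $S(X)$ are then quoted directly from Dierolf--Schwanengel \cite{Die:Sch}, so (i) and (iii) come for free once density is arranged. The free dense subgroup is not obtained by a genericity argument but by an explicit coding: generators $f_{\alpha,\beta}$ act as a prescribed finite-support permutation on $X_\alpha$ (this makes the subgroup dense) and as right translation by a fixed letter $j_\gamma(\alpha,\beta)$ on each level $\{\gamma\}\times F(\gamma)$ with $\gamma\ge\alpha$ (this makes the subgroup free and, crucially, makes each ``initial segment'' $\Theta(F(T_\gamma))$ discrete, since evaluation at the single point $(\gamma,e_\gamma)$ separates all its elements). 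Property (ii) then follows because any set of size $<\kappa$ lies in some $\Theta(F(T_\gamma))$ by regularity of $\kappa$. Note that the pointwise-convergence topology on $S(X_\kappa)$ is \emph{not} $\kappa$-complete; discreteness of small sets comes from the coding, not from closure of the filter under small intersections. Your insistence on $\kappa$-completeness is thus an unnecessary constraint that creates the very obstacle (non-precompact minimality with a $P_\kappa$-topology) you were unable to resolve.
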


\section{Proof of Theorem \ref{theorem:1}}

\begin{lemma}
\label{lemma:0}
An infinite subgroup of a compact metric group has a non-trivial convergent sequence.  
\end{lemma}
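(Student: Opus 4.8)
The plan is to reduce the statement to the elementary observation that an infinite subgroup of a compact group cannot be topologically discrete. Let $H$ be an infinite subgroup of a compact metrizable group $G$, and set $K=\overline{H}$, the closure of $H$ in $G$. Then $K$ is a closed subgroup of $G$, hence a compact, metrizable, Hausdorff topological group, and $K$ is infinite because $H\subseteq K$. Being infinite and compact, $K$ is not discrete, so its identity $1$ is not isolated in $K$; since topological groups are homogeneous, $K$ then has no isolated points at all, so every non-empty open subset of $K$ is infinite. Using that $H$ is dense in $K$ (together with the fact that a dense subset of an infinite Hausdorff space is infinite), I conclude that $H\cap U$ is infinite for every neighbourhood $U$ of $1$ in $G$.

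The remainder is a routine diagonal construction. By metrizability of $G$, fix a decreasing neighbourhood base $U_1\supseteq U_2\supseteq\cdots$ at $1$ in $G$, and recursively choose points $h_n\in(U_n\cap H)\setminus\{1,h_1,\dots,h_{n-1}\}$; each such set is non-empty by the previous step, since $U_n\cap H$ is infinite. Then $(h_n)_{n\in\N}$ consists of pairwise distinct points of $H$ with $h_n\in U_n$ for every $n$, so $h_n\to 1$ in $G$; as $1\in H$, the infinite set $\{h_n:n\in\N\}$ is a non-trivial sequence in $H$ converging to $1\in H$, which is exactly what is required.

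I do not expect a genuine obstacle. The only place where compactness of $G$ is truly used is in ruling out that $K=\overline{H}$ is discrete (without it the statement fails, as $\Z\subseteq\R$ shows), and the only point deserving care is the passage from ``$1$ is not isolated in $K$'' to ``$H\cap U$ is infinite for every neighbourhood $U$ of $1$,'' which rests on homogeneity of $K$, on density of $H$ in $K$, and on $K$ being $T_1$ (so that neighbourhoods of a non-isolated point are infinite). One could alternatively split on whether $H$ is closed and, in the non-closed case, replace a sequence $h_n\to x\in\overline{H}\setminus H$ by the ``difference sequence'' $h_n^{-1}h_{n+1}\to 1\in H$, but the argument above has the advantage of avoiding the case distinction.
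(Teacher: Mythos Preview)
Your proof is correct and follows essentially the same approach as the paper's: show that the identity is non-isolated in the subgroup, take a countable decreasing local base at $1$ from metrizability, and recursively choose distinct points converging to $1$. The paper is slightly more direct, observing that the subgroup itself cannot be discrete (a discrete subgroup of a Hausdorff group is closed, hence here compact and thus finite), whereas you detour through the closure $\overline{H}$, homogeneity, and density; this is a presentational difference rather than a genuinely different strategy.
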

\begin{proof}
Assume that $G$ is an infinite subgroup of a compact metric group $K$.
Then
$G$ cannot be discrete, and thus the identity $1$ of $G$ is a non-isolated point of $G$. Let $\{U_n:n\in\mathbb{N}\}$ be a decreasing local base at $1$.
By induction on $n\in\mathbb{N}$ choose $x_n\in U_n\setminus\{x_0,\dots,x_{n-1}\}$.
Then $\{x_n:n\in\mathbb{N}\}$ is
a non-trivial sequence in $G$ converging to $1$.
\end{proof}

\begin{deff}
\cite{P,S}
{\em A subgroup $G$ of a topological group $K$ is said to be {\em essential in $K$\/} provided that $G\cap H$ is a non-trivial subgroup of $K$ 
for every non-trivial closed normal subgroup $H$ of $K$. 
}
\end{deff}

If $K$ is abelian, then every subgroup $H$ of $K$ is normal, and so the word ``normal'' can be omitted in the above definition. 

The notion of an essential subgroup is a crucial ingredient of the so-called
``minimality criterion'', due to Prodanov and Stephenson \cite{P,S}, describing dense minimal subgroups of compact groups. 
\begin{fact} {\rm (\cite{P,S}; see also \cite{DP1,DPS})}
\label{minimality-criterion}
A dense subgroup $G$ of a Hausdorff compact group  $K$ is minimal if and only if $G$ is essential in $K$.
\end{fact}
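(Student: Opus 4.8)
The plan is to read minimality off from essentiality by passing to the completion. Write $\mathscr{T}$ for the given topology of $G$; since $G$ is dense in the compact group $K$, the group $(G,\mathscr{T})$ is precompact and $K$ is its completion. I will use three classical facts: (1) a group topology coarser than a precompact one is itself precompact; (2) a continuous homomorphism of topological groups extends to a continuous homomorphism of their completions; (3) up to a topological isomorphism that is the identity on a fixed dense subgroup, a compact group is the unique completion of that subgroup, and a continuous bijective homomorphism from a compact group onto a Hausdorff group is a topological isomorphism. We may assume $G$, and hence $K$, is non-trivial, the statement being vacuous otherwise.

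For ``$G$ essential $\Rightarrow$ $G$ minimal'', take a Hausdorff group topology $\mathscr{T}'$ on $G$ with $\mathscr{T}'\subseteq\mathscr{T}$; I claim $\mathscr{T}'=\mathscr{T}$. By (1), $(G,\mathscr{T}')$ is precompact; let $K'$ denote its completion. The continuous identity map $(G,\mathscr{T})\to(G,\mathscr{T}')$ extends by (2) to a continuous homomorphism $\phi\colon K\to K'$, which is surjective since its image is compact, hence closed, and contains the dense subgroup $G$. The kernel $H=\ker\phi$ is a closed normal subgroup of $K$, and $G\cap H=\ker(\phi|_G)=\{1\}$ because $\phi|_G$ is the identity map of the underlying set of $G$. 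Essentiality of $G$ in $K$ forces $H=\{1\}$, so $\phi$ is a continuous bijective homomorphism of the compact group $K$ onto the Hausdorff group $K'$, hence a topological isomorphism by (3); restricting $\phi$ to the dense subgroup $G$ shows that $\mathrm{id}\colon(G,\mathscr{T})\to(G,\mathscr{T}')$ is a homeomorphism, i.e.\ $\mathscr{T}'=\mathscr{T}$. Thus $(G,\mathscr{T})$ is minimal.

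For the converse I argue by contraposition: suppose $G$ is not essential in $K$ and fix a non-trivial closed normal subgroup $H\trianglelefteq K$ with $G\cap H=\{1\}$. Since $G$ is non-trivial, $H\ne K$, so $L:=K/H$ is a non-trivial compact Hausdorff group; the quotient map $\pi\colon K\to L$ is continuous, open and onto, its restriction $\pi|_G$ is injective (with kernel $G\cap H=\{1\}$), and $\pi(G)$ is dense in $L$. Let $\mathscr{T}'$ be the initial topology on $G$ induced by $\pi|_G$. This is a Hausdorff group topology with $\mathscr{T}'\subseteq\mathscr{T}$ (because $\pi$ is continuous), and with respect to $\mathscr{T}'$ the group $G$ embeds as a dense subgroup of the compact group $L$, so $L$ is the completion of $(G,\mathscr{T}')$. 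If $\mathscr{T}'=\mathscr{T}$, then $K$ and $L$ would both be completions of $(G,\mathscr{T})$, and by (3) the canonical extension of $\mathrm{id}_G$ — which is $\pi$ itself — would be a topological isomorphism, contradicting $\ker\pi=H\ne\{1\}$. Hence $\mathscr{T}'\subsetneq\mathscr{T}$ and $G$ is not minimal.

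Both directions run on the same dictionary: coarsenings of $\mathscr{T}$ correspond, via completions, to continuous surjections of $K$ onto compact groups that are injective on $G$, i.e.\ to quotient maps $K\to K/H$ with $G\cap H=\{1\}$, and such a quotient induces a \emph{strictly} coarser topology on $G$ exactly when $H\ne\{1\}$. The two places that deserve care are the properness of $\mathscr{T}'$ in the converse — this is where uniqueness of completions is genuinely used — and the inference ``$G\cap H=\{1\}\Rightarrow H=\{1\}$'' in the forward direction, which is the sole point where essentiality enters; neither is hard once facts (1)--(3) are at hand, so I expect no serious obstacle beyond careful handling of completions.
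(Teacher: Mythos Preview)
The paper does not supply its own proof of this statement: it is recorded as a Fact with references to Prodanov~\cite{P} and Stephenson~\cite{S} (see also \cite{DP1,DPS}) and is used as a black box in the proof of Theorem~\ref{theorem:1}. Your argument is correct and is essentially the classical one found in those references: coarser Hausdorff group topologies on $G$ correspond, via completion, to quotients $K\to K/H$ with $H$ closed normal and $G\cap H=\{1\}$, and such a quotient is strictly coarser precisely when $H\ne\{1\}$. The three auxiliary facts you invoke are standard, and the two delicate points you flag---uniqueness of completions in the converse, and the single appeal to essentiality in the forward direction---are handled correctly.
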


The straightforward proof of the following lemma is left to the reader. 
\begin{lemma}
\label{lemma:3}
If $G$ is an essential subgroup of an abelian topological group $K$, then 
$$
K[p]=\{x\in K: px=0\}\subseteq G
$$ 
for every prime number $p$. 
\end{lemma}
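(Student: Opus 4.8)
The plan is to argue by contradiction using a single, well-chosen closed normal subgroup. Fix a prime $p$ and suppose towards a contradiction that there is some $x\in K[p]$ with $x\notin G$; in particular $x\neq 0$, since $0\in G$. Consider the cyclic subgroup $H=\gr{x}$ of $K$. Because $px=0$ with $p$ prime while $x\neq 0$, the order of $x$ is exactly $p$, so $H$ is a finite subgroup of $K$ isomorphic to $\Z/p\Z$. Being finite, $H$ is closed in $K$ (here we use that $K$ is Hausdorff), and being a subgroup of an abelian group it is normal; thus $H$ is a non-trivial closed normal subgroup of $K$.

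Now apply the hypothesis that $G$ is essential in $K$: the intersection $G\cap H$ must be a non-trivial subgroup of $K$. But $|H|=p$ is prime, so the only subgroups of $H$ are $\{0\}$ and $H$ itself; since $G\cap H$ is non-trivial, we conclude $G\cap H=H$, i.e.\ $H\subseteq G$. In particular $x\in H\subseteq G$, contradicting the choice of $x$. Hence no such $x$ exists, and $K[p]\subseteq G$, as claimed.

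The proof involves no genuine obstacle; the two small points worth isolating are that a group of prime order has no proper non-trivial subgroups --- which is precisely what lets essentiality pull all of $H$ into $G$ --- and that a finite subgroup of a Hausdorff group is closed, which is what allows $H$ to play the role of the closed normal subgroup in the definition of an essential subgroup. (If one does not wish to assume $K$ Hausdorff, it suffices to assume $K$ is $T_1$, which is all that is needed for finite subsets to be closed, and in the intended application $K$ is compact Hausdorff anyway.)
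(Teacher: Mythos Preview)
Your proof is correct and is precisely the straightforward argument the paper has in mind; in fact the paper does not supply a proof of this lemma at all, merely noting that ``the straightforward proof of the following lemma is left to the reader.'' Your observation that one needs $K$ to be Hausdorff (or at least $T_1$) for the finite cyclic subgroup $\langle x\rangle$ to be closed is a fair point not made explicit in the lemma's hypotheses, but as you note, in the only application (the proof of Theorem~1.3) the group $K$ is compact Hausdorff, so no issue arises.
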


\begin{lemma}
\label{lemma:1}
Let $I$ be an infinite set, $\{K_i: i\in I\}$ a family of non-trivial topological groups and $G$ 
an essential
subgroup of the product $K=\prod_{i\in I} K_i$.
Then $G$ has a non-trivial convergent sequence.
\end{lemma}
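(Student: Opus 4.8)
The plan is to feed the ``coordinate axes'' of the product $K$ into the essentiality hypothesis; note that no commutativity of the $K_i$ is needed here, the word ``normal'' in the definition of essentiality doing exactly the necessary work. For each $i\in I$ put
$$
\widehat{K_i}=\{y\in K:\ y_j=1\ \text{for all}\ j\in I\setminus\{i\}\}.
$$
Then $\widehat{K_i}$ is normal in $K$ (conjugation acts coordinatewise and fixes every trivial coordinate), it is closed (being the intersection of the kernels of the projections $\pi_j\colon K\to K_j$ for $j\ne i$, and $\{1\}$ is closed in each $K_j$), and it is non-trivial, since the $i$-th projection maps it isomorphically onto $K_i$, which is non-trivial by hypothesis. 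As $G$ is essential in $K$, the subgroup $G\cap\widehat{K_i}$ of $K$ is therefore non-trivial, so we may fix $g^{(i)}\in G\cap\widehat{K_i}$ with $g^{(i)}\ne 1$; thus $g^{(i)}_j=1$ for every $j\ne i$ while, necessarily, $g^{(i)}_i\ne 1$.

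Since $I$ is infinite, choose a one-to-one sequence $(i_n)_{n\in\mathbb{N}}$ of elements of $I$ and set $S=\{g^{(i_n)}:n\in\mathbb{N}\}\subseteq G$. The claim is that $S$ is a non-trivial sequence converging to $1$. It is infinite: if $n\ne m$, then $g^{(i_n)}$ and $g^{(i_m)}$ already differ in the coordinate $i_n$, where the former is $\ne 1$ and the latter equals $1$ (because $i_m\ne i_n$). It converges to $1$: a basic open neighbourhood $V$ of $1$ in $K=\prod_{i\in I}K_i$ has the form $V=\prod_{i\in I}U_i$ with each $U_i$ an open neighbourhood of $1$ in $K_i$ and $U_i=K_i$ for all $i$ outside some finite set $F\subseteq I$; since the $i_n$ are distinct, $i_n\in F$ for only finitely many $n$, and whenever $i_n\notin F$ we get $g^{(i_n)}\in V$, because $g^{(i_n)}_j=1\in U_j$ for the (finitely many) $j\in F$, all of which satisfy $j\ne i_n$, while $g^{(i_n)}_j\in K_j=U_j$ for $j\notin F$. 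Hence $S\setminus V$ is finite, and therefore $S\setminus U$ is finite for every open $U$ containing $1$. As $1\in G$ and $G$ carries the subspace topology, $S$ is a non-trivial convergent sequence in $G$, as required.

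I do not expect any serious obstacle: the whole content is the observation that the axis subgroups $\widehat{K_i}$ are precisely the closed normal subgroups against which essentiality should be tested, combined with the elementary feature of the Tychonoff product topology by which an element supported on a single coordinate falls inside every basic neighbourhood of the identity once that one coordinate has been ``used up''. The only point requiring a word of care is the closedness of $\widehat{K_i}$, which rests on $\{1\}$ being closed in each $K_i$; this is automatic in the Hausdorff setting in which the lemma will be applied.
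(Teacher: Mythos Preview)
Your proof is correct and follows essentially the same approach as the paper: identify each factor with its ``axis'' copy inside the product, use essentiality to pick a non-identity element of $G$ on each axis, and observe that any countable selection of these elements forms a non-trivial sequence converging to $1$ in the product topology. Your write-up is in fact more detailed than the paper's---you spell out the convergence argument via basic open sets and explicitly flag that closedness of $\widehat{K_i}$ relies on $\{1\}$ being closed in each factor, a point the paper simply asserts.
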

\begin{proof}
We identify each $K_i$ with the closed normal subgroup
$$
\{1\}\times\dots\times \{1\}\times K_i\times \{1\}\times\dots\times\{1\}
$$ 
of $K$, where $K_i$ occupies the $i$th place.
For each $i\in I$, 
use
essentiality of $G$ in $K$
to fix 
$g_i\in G\cap K_{i}$ with $g_{i}\not=1$. 
Since $K_{i}\cap K_{j}=\{1\}$ whenever $i,j\in I$ and $i\not=j$,
it follows that 
$\{g_i:i\in I\}$ is a a faithfully indexed family of elements of $G$.
Choosing a faithfully indexed subset $\{i_n:n\in\mathbb{N}\}$ of $I$,
we obtain an infinite sequence
$\{g_{i_n}:n\in \mathbb{N}\}$ of elements of $G$ converging to $1$.
\end{proof}

\begin{lemma}
\label{lemma:2}
An essential
subgroup of a non-trivial Hausdorff compact torsion-free abelian group contains a non-trivial convergent sequence.
\end{lemma}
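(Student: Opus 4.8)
The plan is to use Pontryagin duality to locate inside $K$ a closed subgroup that is topologically isomorphic to the group $\Z_p$ of $p$-adic integers for some prime $p$, and then to combine essentiality of $G$ with Lemma~\ref{lemma:0}.

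First I would pass to the Pontryagin dual $X=\widehat{K}$. Since $K$ is compact, $X$ is discrete; since $K$ is torsion-free, $X$ is divisible; and since $K$ is non-trivial, $X\neq\{0\}$. The key point is the purely algebraic observation that \emph{every non-trivial divisible abelian group admits a surjective homomorphism onto the Pr\"ufer group $\Z(p^{\infty})$ for at least one prime $p$}. Indeed, by the structure theorem for divisible abelian groups one may write $X\cong\Q^{(\lambda)}\oplus\bigoplus_{q}\Z(q^{\infty})^{(\lambda_q)}$, where not all of the cardinals $\lambda,\lambda_q$ are zero; if $\lambda_q\geq 1$ for some prime $q$ a coordinate projection gives $X\twoheadrightarrow\Z(q^{\infty})$, and otherwise $\lambda\geq 1$ and a coordinate projection followed by the composition $\Q\twoheadrightarrow\Q/\Z\twoheadrightarrow\Z(p^{\infty})$ does the job. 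Dualizing such a surjection $X\twoheadrightarrow\Z(p^{\infty})$, and recalling that $\widehat{\Z(p^{\infty})}\cong\Z_p$ and that the dual of a quotient map of discrete groups is a topological isomorphism onto a closed subgroup of the dual, I obtain a topological isomorphism of $\Z_p$ onto a closed subgroup $N$ of $\widehat{\widehat{K}}\cong K$.

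With $N\cong\Z_p$ available, the conclusion follows quickly. Since $N$ is a non-trivial closed subgroup of $K$, essentiality of $G$ in $K$ gives that $G\cap N$ is a non-trivial subgroup of $K$, and since $N$ is torsion-free this forces $G\cap N$ to be infinite. But $N\cong\Z_p$ is a compact metric group, so Lemma~\ref{lemma:0}, applied to the infinite subgroup $G\cap N$ of $N$, produces a non-trivial convergent sequence; this sequence and its limit lie in $G$, as required.

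I do not anticipate a real difficulty: the only step that is not completely routine is the duality bookkeeping in the second paragraph, and that is standard. If one prefers to avoid exhibiting the $\Z_p$ subgroup, there is an alternative argument by cases on the size of $X=\widehat{K}$. If $X$ is countable then $K$ is compact and metrizable, and $G$ — being a non-trivial (hence, as $K$ is torsion-free, infinite) subgroup of the infinite group $K$ — has a non-trivial convergent sequence by Lemma~\ref{lemma:0}. If $X$ is uncountable then, since every finite direct sum of copies of $\Q$ and of Pr\"ufer groups is countable, the structure decomposition of $X$ involves infinitely many non-trivial summands, so $K\cong\widehat{X}$ is topologically isomorphic to a product of infinitely many non-trivial compact groups, and Lemma~\ref{lemma:1} applies to $G$. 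I would present the uniform argument above and mention this alternative only in passing.
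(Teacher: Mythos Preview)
Your main argument is correct and takes a genuinely different route from the paper's. The paper also starts with Pontryagin duality and the structure theorem for divisible groups, but it dualizes the \emph{entire} decomposition to obtain
\[
K\cong\widehat{\Q}^{\,\sigma_0}\times\prod_{p\in\P}\Z_p^{\,\sigma_p},
\]
and then splits into two cases: if this product has infinitely many non-trivial factors it invokes Lemma~\ref{lemma:1}, and otherwise $K$ is a finite product of compact metric groups, hence metrizable, so Lemma~\ref{lemma:0} applies directly to the (necessarily infinite) subgroup $G$. Your approach instead dualizes only a single quotient $X\twoheadrightarrow\Z(p^{\infty})$ to locate one closed copy $N\cong\Z_p$ inside $K$, and then works entirely inside that metric subgroup via essentiality and Lemma~\ref{lemma:0}. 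This is more uniform---no case distinction, and Lemma~\ref{lemma:1} is not needed at all---at the modest cost of the extra duality bookkeeping (dual of a quotient is a closed subgroup), which, as you say, is standard. The alternative argument you sketch at the end, splitting on whether $\widehat{K}$ is countable or uncountable, is essentially the paper's proof recast in terms of the dual.
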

\begin{proof}
Assume that $G$ is 
an essential
subgroup of a non-trivial compact torsion-free abelian group $K$.
Since $K$ is torsion-free, the Pontryagin dual of $K$ is divisible, and 
from \cite[Theorem 25.8]{HRoss} we conclude that
there exists a sequence of cardinals $\{\sigma_p:p\in\mathbb P\cup\{0\}\}$ such that
\begin{equation}
\label{compact:torsion:free:group}
K=\widehat{\mathbb Q}^{\sigma_0}\times\prod_{p\in\mathbb P}{\mathbb Z}_p^{\sigma_p}, 
\end{equation}
where $\widehat{\mathbb Q}$ denotes the Pontryagin dual of the discrete group
$\mathbb{Q}$ of rational numbers, ${\mathbb P}$ is the set of all prime numbers, and ${\mathbb Z}_p$ denotes the group of $p$-adic integers.

If 
the product \eqref{compact:torsion:free:group}
can be (re-)written as a product of infinitely many non-trivial topological groups,
then the conclusion of our lemma follows from 
Lemma \ref{lemma:1}.
In the remaining case $K$ is metrizable being a finite product of compact metric groups. Since $K$ is non-trivial and $G$ is essential in $K$, there exists 
$g\in G\cap K$
with $g\not=0$. Since $K$ is torsion-free, $g$ has an infinite order, and so $G$ is an infinite group.
Applying Lemma \ref{lemma:0}, we obtain a non-trivial convergent sequence in $G$.
\end{proof}

\begin{proof}[\bf Proof of Theorem \ref{theorem:1}]
Assume that $G$ is an infinite minimal abelian group.
Then its completion $K$ is a compact Hausdorff abelian group
\cite{PS} (see also \cite{DPS}).
Moreover, $G$ is essential in $K$ by Fact \ref{minimality-criterion}.
We consider two cases, depending on the size 
of the torsion part 
$$
t(K)=\{x\in K: nx=0
\mbox{ for some }
n\in\mathbb{N}\setminus\{0\}\}
$$
 of $K$.

\medskip
{\em Case 1\/}. {\sl $t(K)$ is uncountable\/}. 
Then 
the $p$-rank $r_p(K)$ of $K$ must be uncountable for some $p\in\P$. 
In particular, $K[p]$ is uncountable. Being a closed subgroup of the compact group $K$, the group $K[p]$ is compact.
Hence $K[p]$ contains a non-trivial convergent sequence by Fact \ref{main:fact}.
Finally, $K[p]\subseteq G$ by Lemma \ref{lemma:3}.
 
\medskip
{\em Case 2\/}. {\sl $t(K)$ is at most countable\/}.
Then $U=K\setminus(t(K)\setminus\{0\})$ is a $G_\delta$-subset of $K$ containing $0$. Therefore, there exists a closed $G_\delta$-subgroup $N$ of $K$  satisfying
$N\subseteq U$ (see, for example, \cite[Chapter II, Theorem 8.7]{HRoss} or \cite{Arh}). In particular, $N\cap t(K)=\{0\}$. This means that $N$ is torsion-free.

If $N\not=\{0\}$, then $N$ is a non-trivial compact abelian group. Since $G$ is essential in $K$, $G\cap N$ is essential in $N$. Since $N$ is torsion-free, Lemma 
\ref{lemma:2} yields 
that
$G\cap N$ (and thus $G$ as well) has a non-trivial convergent sequence. 

If $N=\{0\}$, then $\{0\}$ is a $G_\delta$-subset of $K$, and so $K$ is metrizable. 
Applying Lemma \ref{lemma:0}, we obtain a non-trivial convergent sequence in $G$.
\end{proof}

\section{Proof of Theorem \ref{theorem:2}}

The construction in this section is inspired by an old construction of the author \cite{Sh}.

Given a set $X$, the symbol $S(X)$ denotes the {\em symmetric group of $X$\/}, i.e., the set of all bijections of the set $X$ with the composition of maps as the group operation. We equip $S(X)$ with the {\em topology of pointwise convergence on $X$\/} whose base is given by the family 
$$
\mathscr{W}(X)=\{W(X,Z,\varphi): Z
\mbox{ is a finite subset of }
X
\mbox{ and }
\varphi: Z\to X
\mbox{ is an injection}
\},
$$
where 
\begin{equation}
\label{definition:of:W}
W(X,Z,\varphi)=\{f\in S(X): f\restriction_Z=\varphi\}.
\end{equation}

As usual, an ordinal $\alpha$ is considered to be the set consisting of all smaller ordinals; that is, $\alpha=\{\beta: \beta<\alpha\}$. In what follows, 
$F(\alpha)$ denotes the free group with the alphabet $\alpha$.
For special emphasis, 
we use $*_\alpha$ to denote the group operation of $F(\alpha)$
and $e_\alpha$ to denote the identity element of $F(\alpha)$.

Fix an uncountable regular cardinal $\kappa$. For $\gamma\in\kappa+1$ 
define
\begin{equation}
\label{definition:of:T}
T_\gamma=\{(\alpha,\beta)\in(\gamma\setminus\omega)\times\gamma: 
\beta<\alpha\}
\end{equation}
and
\begin{equation}
\label{definition:of:X}
X_\gamma=\bigcup_{\alpha\in\gamma}\{\alpha\}\times F(\alpha).
\end{equation}

For every $\gamma\in \kappa\setminus\omega$ we have 
$|T_{\gamma+1}|=|\gamma|$, 
so we can fix  an injection $j_\gamma: T_{\gamma+1}\to \gamma$.

\begin{claim}
\label{J:is:a:monomorphism}
The unique homomorphism $J_\gamma: F(T_{\gamma+1})\to F(\gamma)$ 
extending $j_\gamma$
is an injection.
\end{claim}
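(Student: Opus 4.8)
The plan is to exhibit a left inverse of $J_\gamma$, which yields injectivity at once. The only hypothesis available is that $j_\gamma$ is an injection, and I would use it as follows: $j_\gamma$ restricts to a bijection between $T_{\gamma+1}$ and the subset $j_\gamma(T_{\gamma+1})$ of $\gamma$, so the inverse map $j_\gamma^{-1}:j_\gamma(T_{\gamma+1})\to T_{\gamma+1}$ is well defined. Define a function $r:\gamma\to F(T_{\gamma+1})$ by letting $r(\xi)=j_\gamma^{-1}(\xi)$ when $\xi\in j_\gamma(T_{\gamma+1})$ and letting $r(\xi)$ be the identity element of $F(T_{\gamma+1})$ for the remaining $\xi\in\gamma$; here injectivity of $j_\gamma$ is precisely what makes $r$ well defined.

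By the universal property of the free group $F(\gamma)$, the function $r$ extends to a (unique) homomorphism $R:F(\gamma)\to F(T_{\gamma+1})$. Next I would verify that $R\circ J_\gamma$ is the identity of $F(T_{\gamma+1})$: both $R\circ J_\gamma$ and $\mathrm{id}_{F(T_{\gamma+1})}$ are homomorphisms from $F(T_{\gamma+1})$ to itself, and for every generator $t\in T_{\gamma+1}$ one has $R(J_\gamma(t))=R(j_\gamma(t))=j_\gamma^{-1}(j_\gamma(t))=t$; since a homomorphism out of a free group is determined by its values on the free generators, it follows that $R\circ J_\gamma=\mathrm{id}_{F(T_{\gamma+1})}$. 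Consequently $J_\gamma$ has a left inverse, hence is injective, as claimed.

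An equivalent route uses the normal form theorem for free groups: a non-empty reduced word $t_1^{\varepsilon_1}\dots t_n^{\varepsilon_n}$ over the alphabet $T_{\gamma+1}$ is carried by $J_\gamma$ to $j_\gamma(t_1)^{\varepsilon_1}\dots j_\gamma(t_n)^{\varepsilon_n}$, and injectivity of $j_\gamma$ guarantees that no cancellation of a block $j_\gamma(t_i)^{\varepsilon_i}j_\gamma(t_{i+1})^{\varepsilon_{i+1}}$ with $j_\gamma(t_i)=j_\gamma(t_{i+1})$ and $\varepsilon_{i+1}=-\varepsilon_i$ is created, so the image is again a non-empty reduced word over $\gamma$ and hence differs from $e_\gamma$. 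Either way there is no real obstacle: the claim is a routine consequence of the universal property of free groups together with the assumption that $j_\gamma$ is one-to-one, and the only point needing the slightest care is the well-definedness of the partial inverse $j_\gamma^{-1}$, which is exactly where injectivity of $j_\gamma$ is used.
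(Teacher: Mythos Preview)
Your argument is correct: constructing a left inverse $R$ via the universal property, or alternatively appealing to the normal form theorem, are both standard and valid ways to show that an injection of free generating sets induces an injective homomorphism of free groups. The paper itself gives no proof of this claim, treating it as an evident consequence of the injectivity of $j_\gamma$, so your write-up simply supplies the routine details that the paper omits.
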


For each $\gamma\in \kappa\setminus\omega$, the family $H_\gamma$ of all bijections of $X_\gamma$ that move only finitely many elements of $X_\gamma$, is dense in $S(X_\gamma)$ and has size $|X_\gamma|=|\gamma|$, so we can fix an enumeration 
\begin{equation}
\label{definition:of:H} 
H_\gamma=\{h_{\gamma \beta}: \beta\in\gamma\}.
\end{equation}

For $(\alpha,\beta)\in T_\kappa$ define $f_{\alpha, \beta}\in S(X_\kappa)$ by
\begin{equation}
\label{definition:of:f}
f_{\alpha,\beta}(\gamma, g)=
\left\{ \begin{array}{ll}
h_{\alpha\beta}(\gamma, g), & \mbox{ for } \gamma\in\alpha \\
(\gamma, g *_\gamma j_\gamma(\alpha,\beta) ), & \mbox{ for } \gamma\in\kappa\setminus\alpha
\end{array} \right.
\hskip25pt \mbox{ for } (\gamma, g)\in X_\kappa.
\end{equation}

Define 
\begin{equation}
\label{definition:of:Y}
Y_\kappa=\{f_{\alpha,\beta}:(\alpha,\beta)\in T_\kappa\}\subseteq S(X_\kappa),
\end{equation}
and 
let $G_\kappa$ to be the subgroup of $S(X_\kappa)$ generated by $Y_\kappa$.
Define 
the map $\theta: T_{\kappa}\to Y_\kappa$ by 
\begin{equation}
\label{definition:of:theta}
\theta(\alpha,\beta)=f_{\alpha, \beta}
\ \mbox{ for }\ 
(\alpha,\beta)\in T_{\kappa},
\end{equation}
 and let
 $\Theta: F(T_{\kappa})\to G_\kappa$ be the unique homomorphism extending 
$\theta$.

\begin{claim}
\label{calculation:claim}
$\Theta(g)(\gamma,e_\gamma)=(\gamma, J_\gamma(g))$
whenever $\gamma\in\kappa\setminus\omega$ and $g\in F(T_\gamma)$.
\end{claim}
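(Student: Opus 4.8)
The plan is to derive Claim \ref{calculation:claim} from the universal property of free groups, applied twice. Fix $\gamma\in\kappa\setminus\omega$. Since $T_\gamma\subseteq T_{\gamma+1}\subseteq T_\kappa$, the group $F(T_\gamma)$ sits canonically inside $F(T_{\gamma+1})$ (which is what makes $J_\gamma(g)$ meaningful for $g\in F(T_\gamma)$) and inside $F(T_\kappa)$ (which is what makes $\Theta(g)$ meaningful); by freeness these identifications are compatible, so there is no ambiguity in writing $J_\gamma(g)$ and $\Theta(g)$ for $g\in F(T_\gamma)$. The observation that drives everything is this: if $(\alpha,\beta)\in T_\gamma$, then $\omega\le\alpha<\gamma<\kappa$, hence $\gamma\in\kappa\setminus\alpha$ and $(\alpha,\beta)\in T_{\gamma+1}$, so by the second clause of \eqref{definition:of:f} the bijection $f_{\alpha,\beta}=\theta(\alpha,\beta)$ satisfies $f_{\alpha,\beta}(\gamma,w)=(\gamma,\,w*_\gamma j_\gamma(\alpha,\beta))$ for every $w\in F(\gamma)$. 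Thus each generator $\theta(\alpha,\beta)$ with $(\alpha,\beta)\in T_\gamma$ maps the fibre $\{\gamma\}\times F(\gamma)$ onto itself, acting there as right translation by $j_\gamma(\alpha,\beta)$.

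Consequently $\{\gamma\}\times F(\gamma)$ is invariant under $\theta(\alpha,\beta)$ and $\theta(\alpha,\beta)^{-1}$ for all $(\alpha,\beta)\in T_\gamma$, hence under $\Theta(g)$ for every $g\in F(T_\gamma)$. So $g\mapsto\Phi(g)$, where $\Phi(g)$ is the restriction of $\Theta(g)$ to $\{\gamma\}\times F(\gamma)$, defines a homomorphism from $F(T_\gamma)$ into $S(F(\gamma))$ once we identify $\{\gamma\}\times F(\gamma)$ with $F(\gamma)$ via $(\gamma,w)\leftrightarrow w$. Under this identification $\Phi(\theta(\alpha,\beta))$ is the right translation $\rho_{j_\gamma(\alpha,\beta)}$, $w\mapsto w*_\gamma j_\gamma(\alpha,\beta)$. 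On the other hand $u\mapsto\rho_u$ is a homomorphism $F(\gamma)\to S(F(\gamma))$, so $g\mapsto\rho_{J_\gamma(g)}$ is a homomorphism $F(T_\gamma)\to S(F(\gamma))$. Since $J_\gamma$ extends $j_\gamma$, the homomorphisms $\Phi$ and $g\mapsto\rho_{J_\gamma(g)}$ agree on the free generating set $T_\gamma$, hence coincide on all of $F(T_\gamma)$. Evaluating at $e_\gamma$ yields $\Theta(g)(\gamma,e_\gamma)=(\gamma,\,e_\gamma*_\gamma J_\gamma(g))=(\gamma,J_\gamma(g))$, which is the claim. (One may equally argue by a bare-hands induction on the length of a word in the generators $T_\gamma$ and their inverses representing $g$, peeling one letter off the correct end and invoking \eqref{definition:of:f}; that version needs the auxiliary identity $\Theta(g)(\gamma,w)=(\gamma,\,w*_\gamma J_\gamma(g))$ for arbitrary $w\in F(\gamma)$, of which the claim is the instance $w=e_\gamma$.)

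The only point requiring genuine care is the bookkeeping around the group operation of $S(X_\kappa)$. The assertion that $u\mapsto\rho_u$ is a \emph{homo}morphism (equivalently, that composing $\Theta(g_1)$ and $\Theta(g_2)$ reproduces right translation by $J_\gamma(g_1)*_\gamma J_\gamma(g_2)$ rather than by $J_\gamma(g_2)*_\gamma J_\gamma(g_1)$) pins down the convention for composition in $S(X_\kappa)$ as the left-to-right one, $(\phi\psi)(x)=\psi(\phi(x))$; this is precisely the convention under which \eqref{definition:of:f}, \eqref{definition:of:theta} and ``$\Theta$ is a homomorphism'' are mutually consistent, and it must be used throughout. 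Everything else — the invariance of the fibre, the evaluation of $f_{\alpha,\beta}$ on it, and the two appeals to the universal property — is a routine unwinding of the definitions. It is also worth stressing that the hypothesis $g\in F(T_\gamma)$ is essential: if a generator $(\alpha,\beta)$ with $\alpha>\gamma$ were allowed, then on the fibre $\{\gamma\}\times F(\gamma)$ the \emph{first} clause of \eqref{definition:of:f} would take over (via $h_{\alpha\beta}$), and $\Theta(g)$ need no longer preserve that fibre at all.
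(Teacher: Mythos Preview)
Your proof is correct. The paper's own argument is exactly the bare-hands computation you sketch in your parenthetical remark: it writes $g=\prod_{k=0}^n(\alpha_k,\beta_k)^{\varepsilon_k}$, expands $\Theta(g)$ as $f_{\alpha_n,\beta_n}^{\varepsilon_n}\circ\cdots\circ f_{\alpha_0,\beta_0}^{\varepsilon_0}$, and then applies this composite to $(\gamma,e_\gamma)$ one letter at a time using the second clause of \eqref{definition:of:f}. Your route repackages that computation conceptually: you observe that the fibre $\{\gamma\}\times F(\gamma)$ is $\Theta(F(T_\gamma))$-invariant, that restriction to it yields a homomorphism $\Phi:F(T_\gamma)\to S(F(\gamma))$, and that $\Phi$ agrees on the free generators with $g\mapsto\rho_{J_\gamma(g)}$, hence everywhere by the universal property. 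This buys you the stronger identity $\Theta(g)(\gamma,w)=(\gamma,w*_\gamma J_\gamma(g))$ for all $w$ in one stroke, and your explicit discussion of the left-to-right composition convention in $S(X_\kappa)$ surfaces a point the paper uses silently when it writes $\prod_{k} f_{\alpha_k,\beta_k}^{\varepsilon_k}=f_{\alpha_n,\beta_n}^{\varepsilon_n}\circ\cdots\circ f_{\alpha_0,\beta_0}^{\varepsilon_0}$. One small notational slip: you write $\Phi(\theta(\alpha,\beta))$ where you mean $\Phi((\alpha,\beta))$, since the domain of $\Phi$ is $F(T_\gamma)$, not $S(X_\kappa)$.
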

\begin{proof}
The conclusion of our claim obviously holds for the identity element of $F(T_\gamma)$, so we will assume that $g$ is not the identity of $F(T_\gamma)$.
Then there exist $n\in\mathbb{N}$, $\{(\alpha_k,\beta_k):k\le n\}\subseteq T_\gamma$
and $\{\varepsilon_k:k\le n\}\subseteq \{-1,1\}$ such that
$$
g=\prod_{k=0}^n (\alpha_k,\beta_k)^{\varepsilon_k}.
$$
Together with \eqref{definition:of:theta} this yields
\begin{align}
\label{theta}
\Theta(g)=\Theta\left(\prod_{k=0}^n (\alpha_k,\beta_k)^{\varepsilon_k}\right)&=
\prod_{k=0}^n \theta(\alpha_k,\beta_k)^{\varepsilon_k}\\
&=
\prod_{k=0}^n \left(f_{\alpha_k,\beta_k}\right)^{\varepsilon_k}
=
f_{\alpha_n,\beta_n}^{\varepsilon_n}\circ f_{\alpha_{n-1},\beta_{n-1}}^{\varepsilon_{n-1}}\circ\ldots
\circ f_{\alpha_0,\beta_0}^{\varepsilon_0}.
\notag
\end{align}
From \eqref{theta} and \eqref{definition:of:f} we get
\begin{align*}
\Theta(g)(\gamma,e_\gamma)&=
f_{\alpha_n,\beta_n}^{\varepsilon_n}\circ f_{\alpha_{n-1},\beta_{n-1}}^{\varepsilon_{n-1}}\circ\ldots
\circ f_{\alpha_0,\beta_0}^{\varepsilon_0}(\gamma,e_\gamma)
\\
&=
f_{\alpha_n,\beta_n}^{\varepsilon_n}\circ f_{\alpha_{n-1},\beta_{n-1}}^{\varepsilon_{n-1}}\circ\ldots
\circ f_{\alpha_1,\beta_1}^{\varepsilon_1}(\gamma,j_\gamma(\alpha_0,\beta_0)^{\varepsilon_0})
\\
& \dots 
\\
&=f_{\alpha_n,\beta_n}^{\varepsilon_n}\left(\gamma,\prod_{k=0}^{n-1} j_\gamma(\alpha_k,\beta_k)^{\varepsilon_k}\right)
\\
&=\left(\gamma, \prod_{k=0}^{n} j_\gamma(\alpha_k,\beta_k)^{\varepsilon_k}\right)
=
(\gamma,J_\gamma(g)).
\end{align*}
\end{proof}

\begin{claim}
\label{isomorphism:claim}
$\Theta: F(T_{\kappa})\to G_\kappa$ is an isomorphism.
\end{claim}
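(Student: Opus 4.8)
The claim is essentially a corollary of Claims \ref{J:is:a:monomorphism} and \ref{calculation:claim}, so the plan is simply to assemble them. First, $\Theta$ is automatically surjective: its image is a subgroup of $G_\kappa$ containing $\Theta(T_\kappa)=\theta(T_\kappa)=Y_\kappa$ by \eqref{definition:of:theta}, and $G_\kappa=\gr{Y_\kappa}$ by the very definition of $G_\kappa$; hence $\Theta(F(T_\kappa))=G_\kappa$. Everything therefore reduces to checking that $\ker\Theta$ is trivial, i.e., that $\Theta(g)$ is not the identity permutation of $X_\kappa$ for every $g\in F(T_\kappa)$ with $g\neq 1$ (here $1$ denotes the identity of $F(T_\kappa)$).

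Given such a $g$, the next step is to localise it inside some $F(T_\gamma)$ with $\gamma<\kappa$. Since $g$ is a non-empty reduced word over the alphabet $T_\kappa$, it involves only finitely many letters, say $(\alpha_0,\beta_0),\dots,(\alpha_n,\beta_n)\in T_\kappa$, and I would set $\gamma=\sup\{\alpha_k+1:k\le n\}$. One then verifies, directly from \eqref{definition:of:T}, that $\gamma<\kappa$ and $\omega<\gamma$, so $\gamma\in\kappa\setminus\omega$, and that $\alpha_k\in\gamma\setminus\omega$ and $\beta_k<\alpha_k<\gamma$ for each $k\le n$, so that $(\alpha_k,\beta_k)\in T_\gamma$; consequently $g\in F(T_\gamma)\subseteq F(T_{\gamma+1})$.

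With this $\gamma$ in hand, Claim \ref{calculation:claim} applies and yields $\Theta(g)(\gamma,e_\gamma)=(\gamma,J_\gamma(g))$, where $(\gamma,e_\gamma)$ is a legitimate point of $X_\kappa$ by \eqref{definition:of:X}. Since $g\neq 1$ and the homomorphism $J_\gamma:F(T_{\gamma+1})\to F(\gamma)$ is injective by Claim \ref{J:is:a:monomorphism}, we get $J_\gamma(g)\neq e_\gamma$, hence $\Theta(g)(\gamma,e_\gamma)\neq(\gamma,e_\gamma)$. Thus $\Theta(g)$ moves a point of $X_\kappa$ and so is not the identity of $S(X_\kappa)$. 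This proves $\ker\Theta=\{1\}$, and together with surjectivity it shows that $\Theta$ is an isomorphism.

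I do not anticipate a genuine obstacle: the substantive work lives in Claims \ref{J:is:a:monomorphism} and \ref{calculation:claim}. The only place calling for a bit of care is the localisation step — one must be sure that the ordinal $\gamma$ manufactured from the finitely many letters of $g$ really lies in $\kappa\setminus\omega$ (this is where $\alpha_k\ge\omega$ enters) and that every letter of $g$ belongs to $T_\gamma$, so that Claim \ref{calculation:claim} can be invoked exactly as stated.
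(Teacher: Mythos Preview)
Your proof is correct and follows essentially the same route as the paper: surjectivity from $\Theta(T_\kappa)=Y_\kappa$ generating $G_\kappa$, and injectivity by localising $g$ in some $F(T_\gamma)$, applying Claim~\ref{calculation:claim}, and then invoking the injectivity of $J_\gamma$ from Claim~\ref{J:is:a:monomorphism}. The only cosmetic difference is that the paper packages the localisation step as the identity $F(T_\kappa)=\bigcup_{\alpha\in\kappa\setminus\omega}F(T_\alpha)$, whereas you construct $\gamma$ explicitly from the letters of $g$; these are equivalent.
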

\begin{proof}
Since $Y_\kappa$ generates $G_\kappa$ and $\Theta(T_\kappa)=\theta(T_\kappa)=Y_\kappa$ 
by \eqref{definition:of:Y} and \eqref{definition:of:theta},
it follows that $\Theta$ is a surjection.

To prove that $\Theta$ is an injection, assume that 
 $g\in F(T_\kappa)$ and $\Theta(g)$ is the identity map of $S(X_\kappa)$.
From \eqref{definition:of:T} we have 
\begin{equation}
\label{union:of:F}
F(T_\kappa)=\bigcup_{\alpha\in\kappa\setminus\omega} F(T_\alpha),
\end{equation}
and so there exists some 
$\gamma\in\kappa\setminus\omega$ with $g\in F(T_\gamma)$.
Since $\Theta(g)$ is the identity map of $S(X_\kappa)$, from Claim \ref{calculation:claim} we get $J_\gamma(g)=e_\gamma$.
Then $g=1$ by 
Claim \ref{J:is:a:monomorphism}.
Therefore, $\ker\Theta=\{1\}$, and so $\Theta$ is an injection.
\end{proof}

\begin{claim}
\label{levels:are:discrete}
$\Theta(F(T_\gamma))$ is discrete for every $\gamma\in\kappa\setminus\omega$.
\end{claim}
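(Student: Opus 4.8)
The plan is to show that the identity map of $X_\kappa$ is an isolated point of the subgroup $\Theta(F(T_\gamma))$; since $\Theta(F(T_\gamma))$ is a topological group in the subspace topology inherited from $S(X_\kappa)$, this immediately yields discreteness.

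Fix $\gamma\in\kappa\setminus\omega$. Since $e_\gamma\in F(\gamma)$, the point $x=(\gamma,e_\gamma)$ belongs to $X_\kappa$ by \eqref{definition:of:X}. Let $\varphi\colon\{x\}\to X_\kappa$ be the injection determined by $\varphi(x)=x$, and consider the basic open neighborhood
$$
U=W(X_\kappa,\{x\},\varphi)=\{f\in S(X_\kappa):f(x)=x\}
$$
of the identity map, where the equality uses \eqref{definition:of:W}. For an arbitrary $g\in F(T_\gamma)$, Claim \ref{calculation:claim} gives $\Theta(g)(x)=(\gamma,J_\gamma(g))$, so $\Theta(g)\in U$ holds if and only if $J_\gamma(g)=e_\gamma$. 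By Claim \ref{J:is:a:monomorphism} the homomorphism $J_\gamma$ is injective, hence $J_\gamma(g)=e_\gamma$ forces $g$ to be the identity element of $F(T_\gamma)$. Consequently $U\cap\Theta(F(T_\gamma))$ consists of the identity map alone.

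Finally, $\Theta(F(T_\gamma))$ is a subgroup of the topological group $S(X_\kappa)$ (recall that $S(X_\kappa)$ with the topology of pointwise convergence is a topological group), hence is itself a topological group in the subspace topology; a topological group in which the identity element is isolated is discrete, so $\Theta(F(T_\gamma))$ is discrete.

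I do not anticipate any serious obstacle here, as the substantive work has already been isolated in Claims \ref{J:is:a:monomorphism} and \ref{calculation:claim}; the only points requiring a little care are that the witnessing neighborhood be taken inside $S(X_\kappa)$ rather than merely inside $G_\kappa$ (which is harmless, the subspace topologies agreeing) and the standard homogeneity remark that an isolated identity in a topological group propagates to every point.
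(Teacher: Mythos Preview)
Your proof is correct and uses the same core idea as the paper: evaluate $\Theta(g)$ at the point $(\gamma,e_\gamma)$ and invoke Claims~\ref{J:is:a:monomorphism} and~\ref{calculation:claim}. The paper's argument differs only cosmetically, exhibiting for each $g\in F(T_\gamma)$ a basic open set $W(X_\kappa,\{(\gamma,e_\gamma)\},\varphi_g)$ that contains $\Theta(g)$ and no other element of $\Theta(F(T_\gamma))$, rather than isolating the identity and appealing to homogeneity of the topological group.
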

\begin{proof}
For each $g\in F(T_\gamma)$ let $\varphi_g: \{(\gamma,e_\gamma)\}\to X_\kappa$
be the map defined by 
\begin{equation}
\label{definition:of:varphi}
\varphi_g(\gamma,e_\gamma)=(\gamma, J_\gamma(g)),
\end{equation}
 so that 
$W(X_\kappa, \{(\gamma,e_\gamma)\}, \varphi_g)\in\mathscr{W}$.

(i) For every $g\in F(T_\gamma)$
one has
$$
\Theta(g)(\gamma,e_\gamma)=(\gamma, J_\gamma(g))=\varphi_g(\gamma,e_\gamma)
$$
by Claim \ref{calculation:claim} and \eqref{definition:of:varphi},
which yields 
$$
\Theta(g)\in W(X_\kappa, \{(\gamma,e_\gamma)\}, \varphi_g).
$$

(ii) Suppose that $g_0,g_1\in F(T_\gamma)$ and $g_0\not=g_1$.
Then $J_\gamma(g_0)\not=J_\gamma(g_1)$ by Claim 
\ref {J:is:a:monomorphism},
which together with \eqref{definition:of:varphi} yields $\varphi_{g_0}(\gamma,e_\gamma)\not=\varphi_{g_1}(\gamma,e_\gamma)$,
and thus
$$
W(X_\kappa, \{(\gamma,e_\gamma)\}, \varphi_{g_0})\cap W(X_\kappa, \{(\gamma,e_\gamma)\}, \varphi_{g_1})=\emptyset.
$$

Since $\mathscr{W}$ is a base for the topology of $S(X_\kappa)$,
from (i) and (ii) we conclude that the family 
$$
\{W(X_\kappa, \{(\gamma,e_\gamma)\}, \varphi_g): g\in F(T_\gamma)\}
$$ 
witnesses that the set $\Theta(F(T_\gamma))$ is discrete.
\end{proof}

\begin{claim}
\label{discrete:claim}
If $D\subseteq G_\kappa$ and $|D|<\kappa$, then $D$ is discrete.
\end{claim}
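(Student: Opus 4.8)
The plan is to reduce the claim to Claim~\ref{levels:are:discrete} by a short cofinality argument that exploits the regularity of $\kappa$. First I would pull $D$ back through the isomorphism $\Theta\colon F(T_{\kappa})\to G_\kappa$ provided by Claim~\ref{isomorphism:claim}, obtaining a set $E=\Theta^{-1}(D)\subseteq F(T_{\kappa})$ with $|E|=|D|<\kappa$. If $E=\emptyset$ then $D=\emptyset$ is trivially discrete, so I may assume $E\not=\emptyset$. By \eqref{union:of:F} every $g\in E$ lies in $F(T_\alpha)$ for some $\alpha\in\kappa\setminus\omega$; selecting one such $\alpha$ for each $g\in E$ defines a function from $E$ into $\kappa\setminus\omega$ whose range $R$ is non-empty and satisfies $|R|\le|E|<\kappa$. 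Since $\kappa$ is regular, $\gamma=\sup R$ is an ordinal with $\gamma<\kappa$, and $\gamma\ge\omega$ because $R\subseteq\kappa\setminus\omega$ is non-empty; hence $\gamma\in\kappa\setminus\omega$.

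Next I would use that the groups $F(T_\alpha)$ form an increasing chain of subgroups of $F(T_{\kappa})$: indeed, $T_\alpha\subseteq T_{\alpha'}$ whenever $\alpha\le\alpha'$ directly from \eqref{definition:of:T}, and therefore $F(T_\alpha)\subseteq F(T_{\alpha'})$. Since each element of $E$ belongs to $F(T_\alpha)$ for some $\alpha\in R$ with $\alpha\le\gamma$, we conclude $E\subseteq F(T_\gamma)$, and consequently
$$
D=\Theta(E)\subseteq\Theta(F(T_\gamma)).
$$
Now Claim~\ref{levels:are:discrete} applies (as $\gamma\in\kappa\setminus\omega$) and tells us that $\Theta(F(T_\gamma))$ is discrete in $S(X_\kappa)$; since any subspace of a discrete space is discrete, the subset $D$ of $\Theta(F(T_\gamma))$ is discrete as well.

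The whole argument is short, and the only ingredient beyond Claims~\ref{isomorphism:claim} and~\ref{levels:are:discrete} is the passage from $E$ to a single level $F(T_\gamma)$. This is exactly the place where uncountable regularity of $\kappa$ is essential: without it, $E$ could meet cofinally many of the levels $F(T_\alpha)$ and the reduction would break down. I do not anticipate any further obstacle.
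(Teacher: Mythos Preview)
Your argument is correct and follows essentially the same route as the paper: pull $D$ back through $\Theta$, use regularity of $\kappa$ together with the increasing chain $F(T_\alpha)$ to trap $\Theta^{-1}(D)$ inside a single $F(T_\gamma)$, and then invoke Claim~\ref{levels:are:discrete}. The only difference is cosmetic --- you spell out the cofinality step via $\gamma=\sup R$, whereas the paper simply asserts that such a $\gamma$ exists.
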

\begin{proof}
Since $\Theta$ is a bijection by Claim \ref{isomorphism:claim},
$|\Theta^{-1}(D)|=|D|<\kappa$.
By \eqref{definition:of:T}, 
$F(T_\lambda)\subseteq F(T_\mu)$ whenever $\omega\le\lambda<\mu<\kappa$, 
so using \eqref{union:of:F} and regularity of $\kappa$ we can find 
$\gamma\in\kappa\setminus\omega$ such that 
$\Theta^{-1}(D)\subseteq F(T_\gamma)$ 
and so $D\subseteq \Theta(F(T_\gamma))$.
Now 
Claim \ref{levels:are:discrete} applies.
\end{proof}

\begin{claim}
\label{dense:claim}
$Y_\kappa$ is dense in $S(X_\kappa)$. In particular, $G_\kappa$ is dense in $S(X_\kappa)$.
\end{claim}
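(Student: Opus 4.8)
The plan is to check density directly against the standard base $\mathscr{W}(X_\kappa)$ for the topology of pointwise convergence on $S(X_\kappa)$. So fix a finite set $Z\subseteq X_\kappa$ and an injection $\varphi\colon Z\to X_\kappa$. Since $X_\kappa$ is infinite and $Z$ is finite, $\varphi$ extends to a bijection of $X_\kappa$, so $W(X_\kappa,Z,\varphi)$ is a non-empty basic open set; as every basic open set we need to meet is of this form, it suffices to exhibit a single element of $Y_\kappa$ belonging to it.

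First I would find a level above which both $Z$ and its image are ``buried''. Each point of the finite set $Z\cup\varphi(Z)$ has the form $(\gamma,g)$ with $\gamma\in\kappa$, so, using that $\kappa$ is an uncountable regular cardinal, I can choose $\alpha\in\kappa\setminus\omega$ strictly above all the ordinals $\gamma$ that occur. By \eqref{definition:of:X} this gives $Z\cup\varphi(Z)\subseteq X_\alpha$; in particular $\varphi$ may be regarded as an injection of the finite subset $Z$ of $X_\alpha$ into $X_\alpha$.

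The key point is then the two-case definition \eqref{definition:of:f}: for every $\beta\in\alpha$ and every $(\gamma,g)\in X_\alpha$ (so $\gamma<\alpha$) one has $f_{\alpha,\beta}(\gamma,g)=h_{\alpha\beta}(\gamma,g)$, that is, $f_{\alpha,\beta}$ and $h_{\alpha\beta}$ agree on all of $X_\alpha$. Since $X_\alpha$ is infinite, $W(X_\alpha,Z,\varphi)$ is non-empty, so by the density of $H_\alpha$ in $S(X_\alpha)$ recorded in the paragraph containing \eqref{definition:of:H} there is some $h_{\alpha\beta}\in H_\alpha$ (with $\beta\in\alpha$) satisfying $h_{\alpha\beta}\restriction_Z=\varphi$. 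For this $(\alpha,\beta)\in T_\kappa$ we obtain $f_{\alpha,\beta}\restriction_Z=h_{\alpha\beta}\restriction_Z=\varphi$, hence $f_{\alpha,\beta}\in Y_\kappa\cap W(X_\kappa,Z,\varphi)$. Thus $Y_\kappa$ is dense in $S(X_\kappa)$, and since $Y_\kappa$ generates $G_\kappa$ the subgroup $G_\kappa$ is dense as well.

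I do not expect a genuinely hard step here; the only things to be careful about are that the basic open sets in play are non-empty — which holds because $X_\kappa$ and $X_\alpha$ are infinite, so any finite partial injection extends to a permutation — and that the behaviour of $f_{\alpha,\beta}$ on the upper levels $\gamma\geq\alpha$ is completely irrelevant, precisely because $Z$ lies below level $\alpha$. Recognizing that this is exactly what the split into two cases in \eqref{definition:of:f} is designed for is the one conceptual point of the argument.
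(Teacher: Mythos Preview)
Your argument is correct and follows essentially the same route as the paper: pick $\alpha\in\kappa\setminus\omega$ with $Z\cup\varphi(Z)\subseteq X_\alpha$, use the density of $H_\alpha$ in $S(X_\alpha)$ to find $h_{\alpha\beta}$ agreeing with $\varphi$ on $Z$, and then invoke the first case of \eqref{definition:of:f} to conclude $f_{\alpha,\beta}\restriction_Z=\varphi$. One tiny remark: regularity of $\kappa$ is not needed to find $\alpha$ here, since $Z\cup\varphi(Z)$ is finite; uncountability alone suffices, as the paper notes.
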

\begin{proof}
Let $Z$ be a finite subset of $X_\kappa$ and $\varphi: Z\to X_\kappa$ be an injection. 
From \eqref{definition:of:X}
we get
$X_\kappa=\bigcup_{\gamma\in\kappa} X_\gamma$,
and $X_\lambda\subseteq X_\mu$ whenever $\omega\le\lambda<\mu<\kappa$.
Since $\kappa$ is uncountable and
$Z\cup \varphi(Z)$ is a finite subset of $X_\kappa$,
we have
\begin{equation}
\label{definition:of:alpha}
Z\cup\varphi(Z)\subseteq X_\alpha
\end{equation}
for some $\alpha\in\kappa\setminus\omega$.
The bijection $\varphi$ between two finite subsets $Z$ and $\varphi(Z)$ of $X_\alpha$ can be extended to a bijection of the whole $X_\alpha$. 
Therefore,   
$W(X_\alpha,Z,\varphi)$ is a non-empty open subset of $S(X_\alpha)$.
Since $H_\alpha$ is dense in $S(X_\alpha)$, using \eqref{definition:of:H} 
we can find 
$\beta\in\alpha$ such that
$h_{\alpha\beta}\in W(X_\alpha,Z,\varphi)$. That is, 
$h_{\alpha\beta}(\gamma,g)=\varphi(\gamma,g)$  for every $(\gamma,g)\in Z$.
Combining this with \eqref{definition:of:alpha} and
\eqref{definition:of:f}, we conclude that 
$f_{\alpha,\beta}(\gamma, g)=h_{\alpha\beta}(\gamma,g)=\varphi(\gamma,g)$ whenever $(\gamma,g)\in Z$. Together with \eqref{definition:of:W} and \eqref{definition:of:Y} this yields 
$$
f_{\alpha,\beta}\in W(X_\kappa, Z,\varphi)\cap Y_\kappa \not=\emptyset.
$$
Since $\mathscr{W}(X_\kappa)$ is a base for the topology of $S(X_\kappa)$, 
we conclude now that $Y_\kappa$ is dense in $S(X_\kappa)$.
\end{proof}

\begin{proof}[\bf Proof of Theorem \ref{theorem:2}]
Since $|T_\kappa|=\kappa$, the groups $F(\kappa)$ and $F(T_\kappa)$ are 
isomorphic.
Combining this with Claim \ref{isomorphism:claim}, we conclude that 
the groups $F(\kappa)$ and $G_\kappa$ are isomorphic.
Let $\mathscr{T}_\kappa$ be the topology on $F(\kappa)$ obtained by transferring the subgroup topology that $G_\kappa$ inherits from $S(X_\kappa)$ via the isomorphism between $F(\kappa)$ and $G_\kappa$.

Every dense subgroup of an infinite symmetric group $S(X)$ is minimal and has 
no proper non-trivial closed normal subgroups \cite{Die:Sch}. Combining this with Claim \ref{dense:claim}, we get (i) and (iii).
Claim \ref{discrete:claim} yields (ii). 
Finally, (a) follows from (ii), and 
(b) follows from (iii).
\end{proof}

\end{document}